\DeclareMathOperator*{\argmin}{arg\,min}
\newtheorem{remark}{Remark}
\DeclareMathAlphabet\mathbfcal{OMS}{cmsy}{b}{n}
\newtheorem{lemma}{Lemma}[section]
\def\ps@pprintTitle{%
  \let\@oddhead\@empty
  \let\@evenhead\@empty
  \def\@oddfoot{\reset@font\hfil\thepage\hfil}
  \let\@evenfoot\@oddfoot
}
\begin{document}

\begin{frontmatter}

\title{Integration of discrete-event dynamics and machining dynamics for machine tool: modeling, analysis and algorithms}

\author[a]{Mason Ma} 
\author[a]{Alisa Ren}
\author[b]{Christopher Tyler}
\author[b]{Jaydeep Karandikar}
\author[c]{Michael Gomez}
\author[a]{Tony Shi\corref{cor1}}
\ead{tony.shi@utk.edu}
\author[b,d]{Tony Schmitz}

\address[a]{Department of Industrial and Systems Engineering, University of Tennessee Knoxville, Knoxville, TN 37996, USA}
\address[b]{Manufacturing Science Division, Oak Ridge National Laboratory, Oak Ridge, TN 37830, USA}
\address[c]{MSC Industrial Supply Company, Knoxville, TN 37932, USA}
\address[d]{Department of Mechanical, Aerospace and Biomedical Engineering, University of Tennessee Knoxville, Knoxville, TN 37996, USA}

\cortext[cor1]{Corresponding author. Tel.: +1-865-974-7654.}

\begin{abstract}
Machining dynamics research lays a solid foundation for machining operations by providing stable combinations of spindle speed and depth of cut. Furthermore, machine learning has been applied to predict tool life as a function of cutting speed. However, the existing research does not consider the discrete-event dynamics in machine shop, i.e., the machine tool needs to process a series of parts in queue under various practical production requirements.
This paper addresses the integration of discrete-event dynamics and machining dynamics to achieve cost savings in machining. 
We first propose a learning-based cost function for the studied integrated optimization problem of machine tool. 
The proposed cost function utilizes the predicted tool life under different stable cutting speeds for further optimizing speed selection of machine tool to deal with the discrete-event dynamics in machine shop. Then, according to the practical production requirements, we develop several mathematical optimization models for the related integrated optimization problems with the consideration of cost, makespan and due date. 
Numerical results show the effectiveness of our proposed methods and also the potential to be used in practice.
\end{abstract}

\begin{keyword}
Machine tool \sep machining dynamics \sep discrete-event dynamics \sep mathematical optimization \sep operational excellence 
\end{keyword}

\end{frontmatter}

\section{Introduction}
\label{sec:intro}

The daily operations of machine tools aim to produce products to fulfill orders with various production requirements at the minimum cost.
This involves both the machining dynamics that govern the material cutting by the machine tool, and the discrete-event dynamics that govern the production of a series of parts queued at the machine tool.
Studies in the two fields have independently made important contributions to the operational excellence of machine tools in production environments.

The research of machining dynamics has laid a solid foundation for the operational excellence of machine tool by providing a set of stable cutting parameters, including spindle speed.
The related studies focus on the physical constraints and modeling of machining processes \cite{tyler2013analytical,tyler2016coupled1}, the stable and chatter machining parameters identification \cite{altintas2020chatter,karandikar2020stability}, tool wear and tool life prediction \cite{karandikar2015tool,karandikar2021physics}, 
cutting force modeling and measurement \cite{no2019force,gomez2019displacement}
and etc.
The machining of a single part is typically considered in these research studies.
However, few research efforts have considered the practical production requirements for hundreds or even thousands of parts from multiple accepted orders.


In the meantime, the discrete-event dynamics research for manufacturing typically considers the production scheduling of a series of parts that are in the queue of the machine tool. The operational excellence is usually achieved at the machine shop level by considering various production requirements, such as 
order scheduling on parallel machines \cite{wang2013novel,shi2015minimizing,shi2021learning}, production scheduling on batch machines \cite{shi2018customer,qin2019genetic}, 
buffers for machines \cite{zhang2017flow,ma2020workforce}, due date requirement \cite{framinan2018order} and etc.
However, these studies do not consider the physical constraints and practical operating of the machine tools, such as the machining stability and tool life, which can significantly impact the accuracy of processing times. Most existing studies assume that the processing times are fixed and known. As such, the solutions from these studies are often impractical.

Therefore, the integration of machining dynamics and discrete-event dynamics is critical and promising.
In the literature, a few studies have considered the production scheduling of machine tool,
such as machine tool feed rate scheduling for processing parts with various geometry \cite{ferry2008virtual,beudaert20115},
computer numerical control (CNC) machine scheduling with controllable processing times constrained by tool life \cite{kayan2005new,gurel2007considering},
tool replacement scheduling to balance cost of machining conditions, product quality and tool life \cite{vagnorius2010determining,zaretalab2018mathematical}.
However, to our knowledge, no research to date addresses how to integrate machining dynamics, including the machining stability, various cutting speeds and tool life, into discrete-event dynamics for the operational excellence of machine tool. 
We seek to fill this gap.

In this paper, we address the problem of integration of discrete-event dynamics and machining dynamics, denoted by IDM, to increase the profit of machine shop. The machine shop wants to reduce the total cost for producing all fulfilled orders by selecting an appropriate cutting speed for each order. 
We analyze the problem under two scenarios of the practical production constraints with increasing complexities: 1) The integrated optimization of cost and makespan; 2) The integrated optimization of cost and due date.
While only optimizing cost is the foundation, the above two scenarios increase the fidelity of our studied problem to practical production.
We show that we can take advantage of high speed machining and proper production schedule to reduce the overall cost.

We organize the rest of the paper as follows. In Section \ref{sec:problem_settings}, we present the detailed problem settings for both the discrete-event dynamics and the machining dynamics. A learning-based cost function for machine tool is given in Section \ref{sec:main_idea}. 
Different production scenarios are studied in Sections \ref{sec:IDM_C} and \ref{sec:IDM_CD}. The mathematical optimization models, analysis and algorithms are provided. 
In Section \ref{sec:experiments}, we present numerical results for the proposed methods. Section \ref{sec:conclusion} concludes this paper.

\section{Problem Settings}\label{sec:problem_settings}

In this section, we describe our problem settings of IDM in terms of orders and parts, stable spindle speed, processing time and tool life, and machine shop production and the cost.

\subsection{Discrete-Event Dynamics: Order and Part Setting}
We specify the production for $n$ accumulated orders, each consisting of a number of parts. For example, a machine shop may receive more than 10 orders from the master production plan for daily operations. We address how to schedule the production of the machine tool by identifying the best speeds for each of these orders and scheduling their sequences in the queue of the machine tool to save total production cost.

Mathematically, we define the set of $n$ orders as $\mathcal{J}=\{1,2,\dots,n\}$. Each order consists of a specific quantity of parts, denoted by $l_j$, $j \in \mathcal{J}$.
In this paper, we assume all the parts of an order are the same type, but parts for different orders can be different.
The different part types can be resulted by different situations, such as material type, part geometry, and etc.
For the single type of part in an order $j$, let the volume to be removed by machining for each part be $v_j$, such that the total volume to be removed (i.e. workload of order $j$) can be calculated by $v_j l_j$.
All the orders are released at time zero. A due date $d_j$ is associated for order $j$ before which all parts of the order must be produced.

The challenges for such discrete-event dynamics research are that the processing times of orders are various due to different cutting speeds for different part types.
To obtain the operational excellence for machine tool, the critical issue is to obtain the accurate estimation of processing time for orders.

\subsection{Machining Dynamics: Spindle Speed and Tool Life Setting}\label{sec:maching_dynamics}

Machining dynamics provides the foundation for accurate processing time estimation via the machining stability and tool life estimation under a wide variety of spindle speeds.
Without loss of generality, we adopt the milling settings throughout this paper.
The stability lobe diagram (SLD) enables to select the best stable spindle speeds at increased axial depth of cut, as shown in Fig. \ref{fig:sld}.
The machining process can either be stable, that are the green circles below the stability boundary (blue curve), and chatter (red cross). In practice, for a given tool-material combination, the SLD can be obtained by conducting impact test. 
For more details, see the SLD and its governing relationship in \cite{schmitz2018machining}.
In this paper, we assume the stable spindle speeds are selected from SLD generated by impact test.

\begin{figure}
    \centering
    \includegraphics[width=3.3in]{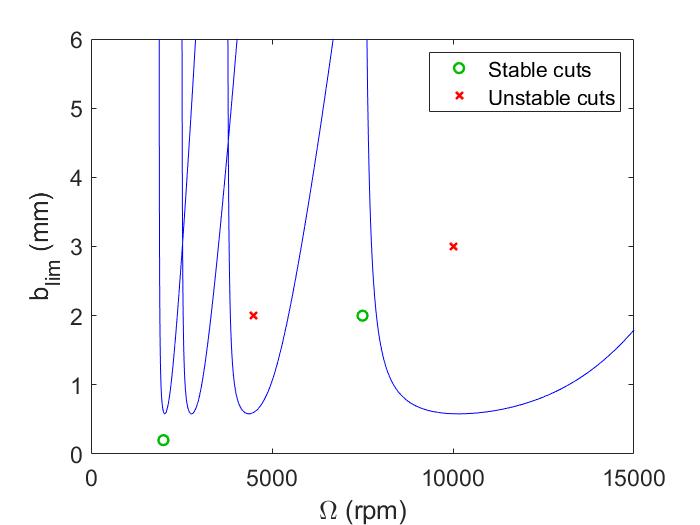}
    \caption{Stability lobe diagram (SLD) to determine the stable and unstable cutting (chatter) by choosing parameters spindle speed ($\Omega$) and axial depth of cut ($b$).}
    \label{fig:sld}
\end{figure}

Given a stable spindle speed $\Omega$ by SLD and a part with the volume to be removed as $v$, the machining time then can be calculated as
\begin{equation}
      t = \frac{v}{MRR} = \frac{v}{ a b f_t N_t \Omega},
\end{equation}
where MRR is the material removal rate defined as the average volume of material removed per unit time with a chosen stable spindle speed $\Omega$ from the SLD, $a$ and $b$ the radial and axial depth of cut, and $f_t$ is the feed per tooth. 
In this paper, we calculate $t$ by considering various $\Omega$ with fixed $a$, $b$ and $f_t$ as the same setting in \cite{karandikar2021physics}.

Notably, the tool life is decided directly given by the machining parameters, like the spindle speed $\Omega$. 
In general, this is given by the Taylor's tool life equation \cite{taylor1906art}:  
\begin{equation}\label{eq:taylor_life_model}
    V T^N = C,
\end{equation}
where the cutting speed $V$ is the peripheral velocity by $V=\pi D \Omega$, $D$ is the diameter of the tool, and $T$ is the tool life. $N$ and $C$ are constants. 
Whilst higher spindle speed can increase the machining efficiency of jobs with less machining time $t$, it leads to reduction of tool life.
Thus, there exists a balance of tool life and the machining time which will eventually impact the cost.
To obtain the tool life model is usually by machine learning methods. In this paper, we use a state-of-the-art machine learning model in \cite{karandikar2021physics} to generate the tool life as input for our method, as discussed in Section \ref{sec:main_idea}.

\subsection{Production and Cost Settings}

For production setting, we specify there is a single machine tool. There are large quantity of tool edges of the same type that are capable of processing all the $n$ orders.
In practice, it is often infeasible to complete a fractional number of a part for a tool edge as it requires to change the tool edge in the middle of the cut.
Typically, when the rest tool life is not long enough for machining a single part, the tool edge needs to be changed with a new one.
The orders are produced sequentially in the queue of the machine tool. No preemption and interruption of orders are allowed. 
Furthermore, the worker would use the same spindle speed to machine all the parts in one order, regardless of changing tool edges in between the machining process.
This is realistic and easy to implement by the on-site workers, because the quality of all parts within an order need to be maintained uniformly by keeping the same cutting conditions.

The objective is to minimize the overall cost introduced by machine tool operating cost for machining parts plus the tool edges.
In \cite{karandikar2021physics}, a cost model per part was provided as
\begin{equation}\label{eq:cost_one_part}
    C_p = r_m t + \frac{(r_m t_{ch} + C_{te}) t }{T},
\end{equation}
where we denote $r_m$, $t_{ch}$, $C_{te}$ and $C_p$ as the machine tool operating cost in \$ per unit time, the time required for changing a tool edge, the cost in \$ per tool edge and the total cost per part, respectively.
This model, however, considers the cost for a single part, rather than the overall cost of orders for hundreds or even thousands of parts made by different materials. 

In this study, we extend the above model to represent the overall cost for producing multiple parts in multiple orders as
\begin{equation}\label{eq:cost_orders}
    C_o = \sum_{j \in \mathcal{J}} \big[ r_m p_j + ( r_m t_{ch} + C_{te})q_j \big],
\end{equation}
where the first term $r_m p_j$ calculates the machine tool operating cost for processing order $j$ and $p_j$ refers to the total machining time with a chosen speed. The second term $( r_m t_{ch} + C_{te})q_j$ calculates the cost induced by changing tool edges, and $q_j$ is the number of tool edges needed for processing order $j$.

Eq. \eqref{eq:cost_orders} is the generalized form of Eq. \eqref{eq:cost_one_part} to include all orders in the practical machine shop production. In next section, we will present a new cost function through the calculation of $p_j$ and $q_j$ under the context of multiple orders in machine shop.

\section{Learning-based Cost Function for Machine Tool}\label{sec:main_idea}

The determination of cost of the orders depends on the machining time and number of tool edges used, which is directly decided by the prediction of tool life. In this section, we present a new cost function based on the tool life prediction in the sense of expectation with a state-of-the-art machine learning model.

\subsection{A Machine Learning Model for Tool Life Prediction}

Karandikar et al. (2021) \cite{karandikar2021physics} developed a physics-guided logistic regression model to predict tool wear probability with cutting speed $V$ and cutting time $T$ as inputs. Through introducing into machine learning Taylor's power law-based tool life description (see Eq. \eqref{eq:taylor_life_model}), the model has a powerful performance in prediction accuracy. The logistic decision boundary of the model, that is, when the probability of tool worn is 0.5, provides an accurate estimation for tool life in the sense of expectation as
\begin{equation}
    \log(T) = -\frac{\theta_1}{\theta_2} \log(V) -\frac{\theta_0}{\theta_2},
\end{equation}
where the coefficients $\theta_0=-89.57$, $\theta_1=13.57$ and $\theta_2=5.26$ are estimated through experimental data for a single-insert endmill (Kennametal KICR073SD30333C) with $D=19.05$ mm diameter and a square uncoated carbide insert (Kennametal 107,888,126 C9 JC). 
Equivalently, this model can be transformed into the below Taylor's tool life model after using logarithmic transformation, 
\begin{equation}\label{eq:tool_life}
      T = \bigg( \frac{C}{V} \bigg)^{1/N} = \bigg(\frac{C}{\pi D \Omega} \bigg)^{1/N},
\end{equation}
where $N=0.388$ and $C=735.2$ m/min using the above values of $\theta_0$, $\theta_1$ and $\theta_2$. The logistic model in the logarithmic and original space of $V$ and $T$ is shown in the two panels of Fig. \ref{fig:tool_life}. The gray scale indicates the probability of tool worn with a cutting speed $V$ (horizontal axis) and cutting time $T$ (vertical axis). We will use Eq. \eqref{eq:tool_life} throughout this paper for tool life prediction.

\begin{figure}[t]
    \centering
    \includegraphics[width=5in]{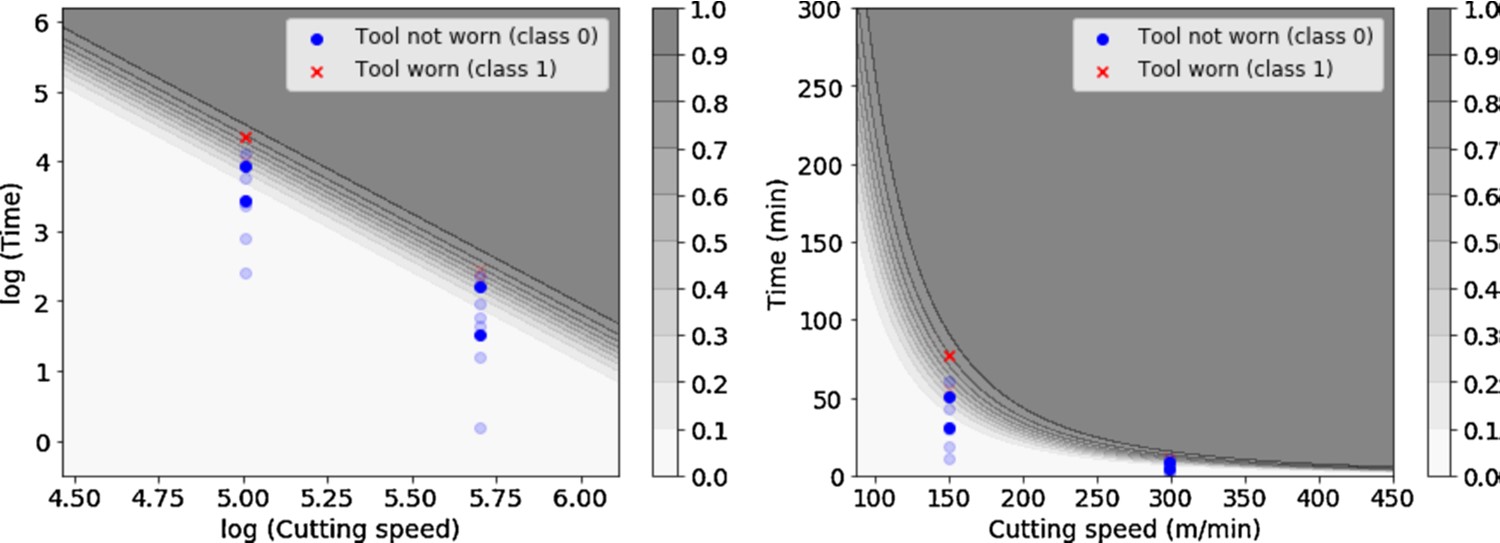}
    \caption{Probability of tool worn as a function of cutting speed from the trained logistic model; the train data points are shown in light and the test data points are shown in dark. The left panel shows the results in the logarithmic space and the right panel in the original space. Figures are reproduced from \cite{karandikar2021physics}.}
    \label{fig:tool_life}
\end{figure}

\subsection{Calculation of Learning-based Cost Function}\label{sec:data_pre}

Based on the learning-based tool life model in \cite{karandikar2021physics}, a cost function is developed for multiple orders. As the parts for different orders can be made by different materials, the SLD of the same tool edge for different orders (parts) are different. 
As a result, the candidate stable spindle speeds for orders are also different.
Thus, we define the matrix of spindle speed as
\begin{equation}
    {\bf \Omega} =
    \left[
    \begin{array}{ccccc}
         \Omega_{11} & \Omega_{12}  & \cdots & \Omega_{1n}\\
         \Omega_{21} & \Omega_{22}  & \cdots & \Omega_{2n}\\
         \vdots      &  \vdots      & \ddots & \vdots  \\
         \Omega_{m1} & \Omega_{m2}  & \cdots & \Omega_{mn}\\
    \end{array}
    \right],
\end{equation}
where the entry $\Omega_{ij}$ refers to the spindle speed for processing order $j$ with spindle speed $i$, $i \in \mathcal{I}$, and $\mathcal{I}=\{1,2,\dots,m\}$ is defined as the index set of available stable spindle speed.

For a given spindle speed $\Omega_{ij}$ to process order $j$, the tool life $T_{ij}$ is calculated by Eq. \eqref{eq:tool_life}. Also, the total machining time $p_{ij}$ for processing order $j$ with spindle speed $\Omega_{ij}$ calculated as 
\begin{equation}\label{eq:order_process_time}
    p_{ij} = \frac{v_j l_j} {a b f_t N_t \Omega_{ij}},
\end{equation}
where $v_j l_j$ indicates total volume of order $j$ by multiplying its part quantity $l_j$ with the part volume $v_j$.
When arranged $T_{ij}$ and $p_{ij}$ in matrix form, we can obtain the below matrices for tool life and machining time:
\begin{equation}
     {\bf T} =
    \left[
    \begin{array}{ccccc}
         T_{11} & T_{12}  & \cdots & T_{1n}\\
         T_{21} & T_{22}  & \cdots & T_{2n}\\
         \vdots      &  \vdots      & \ddots & \vdots  \\
         T_{m1} & T_{m2}  & \cdots & T_{mn}\\
    \end{array}
    \right]
    \text{  and  }
     {\bf P} =
    \left[
    \begin{array}{ccccc}
         p_{11} & p_{12}  & \cdots & p_{1n}\\
         p_{21} & p_{22}  & \cdots & p_{2n}\\
         \vdots      &  \vdots      & \ddots & \vdots  \\
         p_{m1} & p_{m2}  & \cdots & p_{mn}\\
    \end{array}
    \right],
\end{equation}

Using $\bf T$, we can now compute the number of tool edges needed for each order with different spindle speeds as matrix 
\begin{equation}
    {\bf Q} =
    \left[
    \begin{array}{ccccc}
         q_{11} & q_{12}  & \cdots & q_{1n}\\
         q_{21} & q_{22}  & \cdots & q_{2n}\\
         \vdots      &  \vdots      & \ddots & \vdots  \\
         q_{m1} & q_{m2}  & \cdots & q_{mn}\\
    \end{array}
    \right],
\end{equation}
where each entry is calculated by
\begin{equation}\label{eq:tool_qty}
    q_{ij} = \Biggl\lceil \frac{l_j}{ \big \lfloor T_{ij} / t_{ij} \big \rfloor } \Biggl\rceil, \quad\text{and}\quad t_{ij} = \frac{v_j } {a b f_t N_t \Omega_{ij}}. 
\end{equation}
The term $t_{ij}$ is the machining time for a single part of order $j$ with spindle speed $\Omega_{ij}$. Thus, with floor function, $\lfloor T_{ij} / t_{ij} \big \rfloor$ refers to the number of parts of order $j$ that can be finished by a single tool edge. Finally, the number of tool edges for processing $l_j$ parts can be obtained by the ceiling function.

We can further obtain the summation of total machining time and tool edge change time for each order under all its candidate spindle speeds. This summation, denoted by $h_{ij}$, represents the total processing time occupied by order $j$ if with speed $i$:
\begin{equation}\label{eq:sum_time}
    h_{ij} = p_{ij} + t_{ch} q_{ij}.
\end{equation}
The associated total time matrix is denoted by
\begin{equation}
    {\mathbf{H}} =
    \left[
    \begin{array}{ccccc}
        h_{11} & h_{12}  & \cdots & h_{1n}\\
        h_{21} & h_{22}  & \cdots & h_{2n}\\
        \vdots      &  \vdots      & \ddots & \vdots  \\
        h_{m1} & h_{m2}  & \cdots & h_{mn}\\
    \end{array}
    \right].
\end{equation}

Finally, with matrices $\bf P$, $\bf Q$ and $\bf H$, we can obtain the cost function for processing all orders as 
\begin{equation}\label{eq:cost_function}
    f_{cost}(x)=\sum_{j\in \mathcal{J}} \sum_{i \in \mathcal{I}} (r_m h_{ij} + C_{te}q_{ij}) x_{ij}.
\end{equation}
Note here we introduce an indicator $x_{ij}$ defined as
\begin{equation}\label{eq:x_var}
    x_{ij} := \left\{
   \begin{aligned}
   & 1, \quad\text{ if speed $i$ is chosen for processing order $j$},\\
   & 0, \quad\text{ otherwise.}
   \end{aligned}
   \right.
\end{equation}
Eq. \eqref{eq:cost_function} integrates both the machining dynamics, that is, term $r_m h_{ij} + C_{te}q_{ij}$ based on the cost function for parts of orders, and the discrete-event dynamics, that is, discrete variable $x_{ij}$ in Eq. \eqref{eq:x_var} for choosing different speeds for different orders to achieve the reduction of production cost. 

In next section, we will optimize the speed selection through controlling $x_{ij}$ under the discrete-event dynamics environment of machine shop such that $f_{cost}(x)$ can be minimized for processing all orders. 
Before that, we show an illustrative example.

\subsection{Illustration of Cost Saving by the Proposed Integration}\label{sec:motivation}

As an illustrative example of our proposed integration, we show two scenarios in Fig. \ref{fig:motivation} for processing an order: with faster speed in (a) obtained by the expected cost method in \cite{karandikar2021physics}, and slower speed in (b) by solving our developed \ref{M1} model in Section \ref{sec:IDM_C}.
The light grey period is for tool machining, the light green period is for changing the tool edge. The red dashed sections refer to the waste time that is not enough for machining a single part before its tool life comes to end, while the one in the last tool edge is wasted because all parts are done.

\begin{figure}[h]
    \centering
    \includegraphics[width=3.3in]{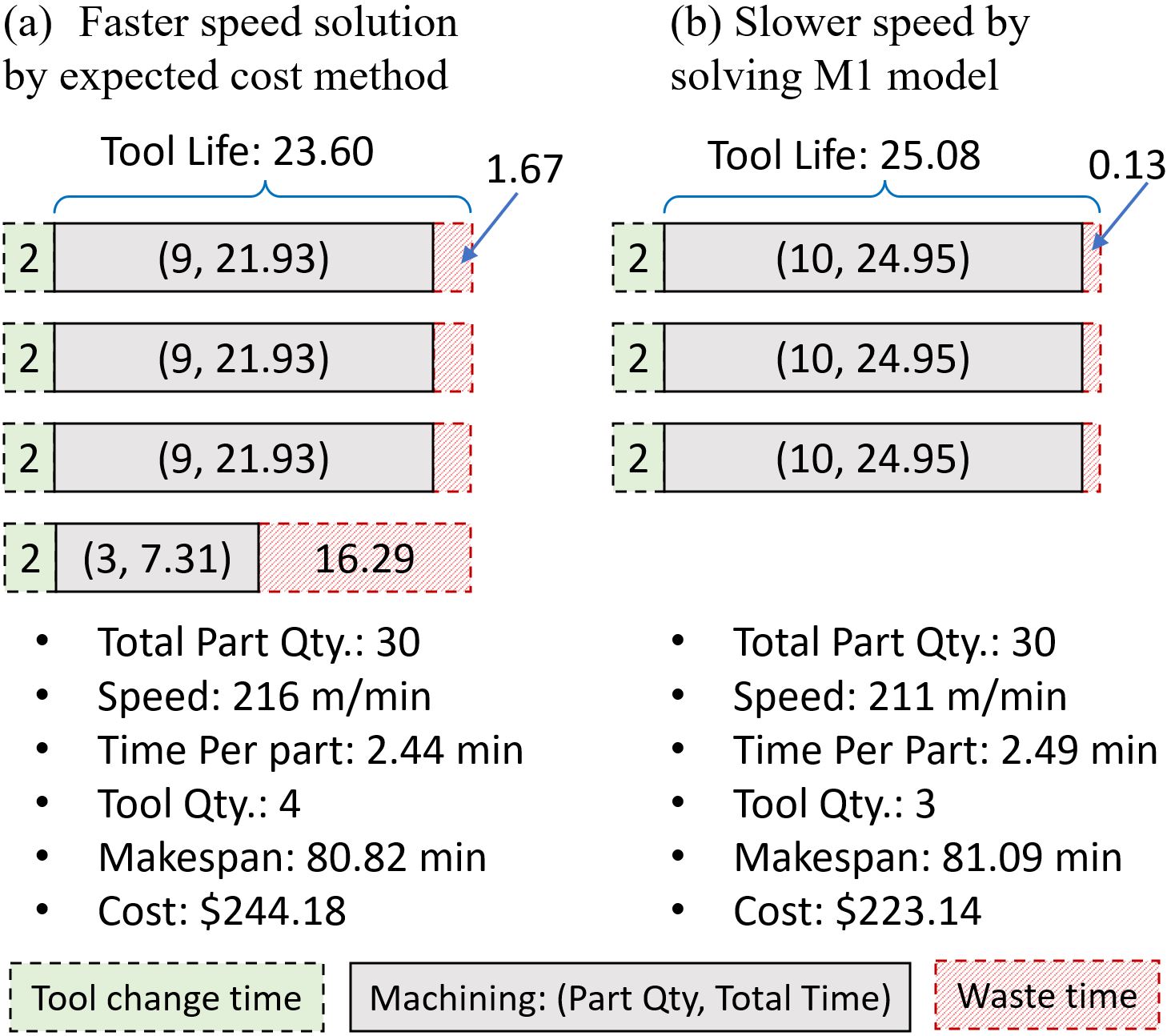}
    \caption{Illustrative example for choosing different speeds to process multiple parts of an order by the expected cost method in \cite{karandikar2021physics} and solving our developed model \ref{M1} in Section \ref{sec:IDM_C}. 
    By choosing slower speed, we process one more part per tool edge in (b) in comparison with (a). The last three parts on the 4th tool edge of (a) are thus moved back to the 3rd tool edge. This leads to saving the last tool edge and reducing the waste time for each tool edge in (a), and importantly, reducing the cost from \$244.18 to \$223.14. See details in Section \ref{sec:experiments}.}
    \label{fig:motivation}
\end{figure}

By choosing slower speed in (b), we process one more part per tool edge in (b) in comparison with (a), namely from 9 part per tool edge to 10. The last three parts on the 4th tool edge of (a) are thus moved back to be processed with the 3rd tool edge, saving the last tool edge and reducing the waste time for each tool edge in (a).
Notably, the production cost is reduced from \$244.18 to \$223.14. The makespan is increased from 80.82 min to 81.09 min as a sacrifice of the reduced cost. Such trade-off between cost and makespan is meaningful in practical production. 
Note Figure \ref{fig:motivation} only gives the case when smaller speed will lead to cost reduction, it is more often and realistic to reduce cost by increasing the speed for high speed machining, and this exactly shows our motivation for this study.

\section{Integrated Optimization of Cost and Makespan}\label{sec:IDM_C}

We denote our problem when only cost is considered as IDM-C, which is studied in Sections \ref{sec:IDM_C_model} - \ref{sec:IDM_C_algo} . In section \ref{sec:IDM_CM},
we study the IDM problem when integrated optimization of cost and makespan is considered, denoted by IDM-CM.
Integer optimization (IO) models are presented for both problems.
For clarity, we first summarize all the notations that will be used for all the models proposed in this paper throughout.

\vspace{0.3cm}
\noindent \textbf{Sets:}
\begin{description}
    \item[\quad $\mathcal{J}:$] set of orders, $\mathcal{J}=\{1,2,\dots,n\}$. Each order $j$ consists of a quantity of $l_j$ parts that are the same part type with the volume to be removed as $v_j$;
    \item[\quad $\mathcal{I}:$] index set of stable candidate spindle speed for arbitrary order, $\mathcal{I}=\{1,2,\dots, m\}$;

    \item[\quad $\bf \Omega:$] ${\bf \Omega} \in \mathbb{R}^{m\times n}$ is the matrix of spindle speed where entry $\Omega_{ij}$ refers to the spindle speed $i$ for processing order $j$, $\forall j \in \mathcal{J}$, $i \in \mathcal{I}$. A column $\Omega_{\boldsymbol{\cdot} j}$ refers to all candidate spindle speeds that are stable for processing order $j$;
    
    \item[\quad $\mathbf{T}:$] $\mathbf{T} \in \mathbb{R}^{m \times n}$ is the matrix of tool edge life if selecting spindle speed $\Omega_{ij}$ for order $j$, $\forall j \in \mathcal{J}$, $i \in \mathcal{I}$. Each entry $T_{ij}$ is calculated by Eq. \eqref{eq:tool_life};
    
    \item[\quad $\mathbf{P}:$] $\mathbf{P}\in \mathbb{R}^{m \times n}$ is the matrix of total machining time for all orders where entry $p_{ij}$ refers to the total machining time if selecting $\Omega_{ij}$ to process order $j$, as in Eq. \eqref{eq:order_process_time};    
    
    \item[\quad $\mathbf{Q}:$] $\mathbf{Q} \in \mathbb{Z}_+^{m \times n}$ is the matrix of the number of tool edges needed for order $j$ with entry $q_{ij}$ calculated by Eq. \eqref{eq:tool_qty};

    \item[\quad $\mathbf{H}:$] $\mathbf{H} \in \mathbb{R}^{m \times n}$ is the matrix of total processing time where entry $h_{ij}$ consists of two parts: total machining time $p_{ij}$ and the tool change time $t_{ch} q_{ij}$, as calculated by Eq. \eqref{eq:sum_time}.
\end{description}

\noindent \textbf{Parameters:}
\begin{description}
    \item[\quad $r_m:$] machine tool operating cost ratio in \$ per unit time;
    \item[\quad $t_{ch}:$] time required for changing a tool edge;
    \item[\quad $C_{te}:$] cost in \$ per tool edge.
\end{description}

\noindent \textbf{Decision variables:}
\begin{description}
    \item[\quad $x_{ij}:$] $x_{ij}=1$ if spindle speed $\Omega_{ij}$ is selected for processing order $j$, $\forall j \in \mathcal{J}$ and $i \in \mathcal{I}$. Otherwise, $x_{ij}=0$.
\end{description}

\subsection{Model with Only Cost Consideration}\label{sec:IDM_C_model}

When only cost is under consideration, the below IO model \ref{M1} aims to find the optimal spindle speed for each order, such that the cost function can be minimized.
\begin{align}
\label{M1} \tag{M1} \quad \min \quad  
& f_{cost}(x) \\
    \label{cons:M1_1}	\textrm{s.t.} \quad & \sum_{i \in \mathcal{I}} x_{ij} = 1, ~~~~~~~~~~ \forall j \in \mathcal{J},\\
	\label{cons:M1_2} & x_{ij} \in \{0,1\}, ~~~~~~~~~~~ \forall i \in \mathcal{I}, j \in \mathcal{J}.
\end{align}
The objective is to minimize the overall cost $f_{cost}(x)$ of producing $n$ orders. Constraints \eqref{cons:M1_1} ensure that each order must be processed by only one spindle speed. Constraints \eqref{cons:M1_2} provide the binary variable restriction on $x_{ij}$.

We show the constraint matrix of formulation~\ref{M1} is totally unimodular. Thus, the IO model can be solved as a linear programming (LP) model.
To do this, we arrange the variables in the sequence of $x_{11}, x_{21},\dots, x_{m1};x_{12},\dots x_{m2}; \dots; x_{1n},\dots, x_{mn}$ as columns of the constraint matrix. The constraint matrix $\mathbf{A}\in \mathbb{Z}^{n\times mn}$ for constraints (\ref{cons:M1_1}) is as below:
\begin{equation}
\setlength{\arraycolsep}{1pt}
\bf A=
\begin{bNiceMatrix}[first-row]
1^{th} & \dots  & m^{th} & (m+1)^{th}  & \dots & 2m^{th}   & \dots &  ((n-1)m+1)^{th}  & \dots  &   nm^{th}  \\
1 & \dots  & 1 & 0  & \dots & 0   & \dots & 0  & \dots  &   0  \\
        0 & \dots  & 0 & 1  & \dots & 1   & \dots & 0  & \dots  &   0  \\
        \vdots & \vdots  & \vdots & \vdots  & \vdots & \vdots   & \vdots & \vdots  & \vdots  &   \vdots  \\
        0 & \dots  & 0 & 0  & \dots & 0   & \dots & 1  & \dots  &   1  \\
\end{bNiceMatrix}.
\end{equation}

The above 0, 1 matrix $\bf A$ is totally unimodular because it satisfies the following two conditions: (1) there are at most two nonzero elements in each column of $\bf A$, which is number 1 and there is only one entry for each column to be 1; (2) $\bf A$ admits an equitable row-bicoloring. In particular, the rows of $\bf A$ can be partitioned into two sets, say ``red set" as row 1 and ``blue set" as rows 2 to n. It's easy to check that the sum of the red rows minus the sum of the blue rows is a vector with entries $\pm1$.

As a result, the polyhedron for formulation \ref{M1} is integral, which has integral extreme points. Thus, it always admits an integral solution and is equivalent to solving the LP relaxation: 
\begin{align}
\label{M1-LP} \tag{M1-LP} \quad \min \quad  & f_{cost}(x) \\
    \label{cons:LP_NC1}	\textrm{s.t.} \quad & \sum_{i \in \mathcal{I}} x_{ij} = 1, ~~~~~~~~~~ \forall j \in \mathcal{J},\\
    \label{cons:LP_NC2}	& x_{ij} \in [0,1], ~~~~~~~~~~~ \forall i \in \mathcal{I}, j \in \mathcal{J},
\end{align}
where we relax the binary variable constraints $x_{ij} \in \{0,1\}$ into the interval $x_{ij}\in[0,1]$. Next, we develop an exact algorithm.

\subsection{Decomposition-based Greedy Algorithm}\label{sec:IDM_C_algo}

Formulation \ref{M1} naturally adopts a decomposition for both the objective and the constraints for each order. 
While every order must be assigned with a spindle speed, there are no additional constraints that couples the assignment of spindle speed for different orders.
Therefore, the assignment of spindle speed for each order is independent.
That is, for each order $j$, we can select the optimal speed from $m$ candidates that minimize its cost $r_m h_{ij} + C_{te}q_{ij}$ in the objective.
In general, we denote the described procedure as Decomposition-based Greedy (DG) and summarize as the below Algorithm \ref{alg:DG}.

\begin{algorithm}[h!]
\caption{The DG algorithm}\label{alg:DG}
\setstretch{1.2}
\SetKwInOut{Input}{Input}
\SetKwInOut{Output}{Output}
\Input{Data matrices $\bf \Omega$; order set $\mathcal{J}$ and associated $l_j$ and $v_j$; the cost parameters $r_m$, $t_{ch}$ and $C_{te}$.}
Initialize $f_{cost}(x) = 0$; $x_{ij} = 0, \forall i \in \mathcal{I}, j\in \mathcal{J}$\;
Calculate matrices $\mathbf{T}$, $\mathbf{Q}$, $\mathbf{P}$ and $\bf H$ with Eqs. \eqref{eq:tool_life} - \eqref{eq:sum_time}\;
\For(\tcc*[f]{Decompose as $n$ subproblems}){$j\in \mathcal{J}$}
{
    \For(\tcc*[f]{Find best speed for $j$}){$i \in \mathcal{I}$}
   {
        Compute $Obj_{ij} = r_m h_{ij} + C_{te}q_{ij}$\;
   }
    Find the best speed $i^*$ of job $j$ by $i^* := \argmin_{i \in \mathcal{I}} Obj_{ij}$\;
   Update $x_{i^*j}=1$\;
   $f_{cost}(x) \leftarrow f_{cost}(x) + Obj_{i^* j}$
}
\textbf{Output} Optimal cost $f_{cost}(x)$, speed $x_{ij}, \forall i \in \mathcal{I}$, $j\in \mathcal{J}$
\end{algorithm}

For Algorithm \ref{alg:DG}, we have the below lemma to show the optimality of its solution.

\begin{lemma}
Given $n$ orders and $m$ candidate speeds, Algorithm~\ref{alg:DG} solves problem IDM-C optimally in $O(18mn+n)$ time. 
\end{lemma}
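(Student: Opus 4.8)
The plan is to prove the statement in two independent parts: first the optimality of the output of Algorithm~\ref{alg:DG}, and then its running time.

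For optimality, I would exploit the separable structure of \ref{M1} already noted in the text. The objective \eqref{eq:cost_function} is a sum over orders, $f_{cost}(x)=\sum_{j\in\mathcal{J}}\sum_{i\in\mathcal{I}} Obj_{ij}\,x_{ij}$ with $Obj_{ij}=r_m h_{ij}+C_{te}q_{ij}$, and the only constraints, \eqref{cons:M1_1}, act within a single order $j$; no constraint couples the speed chosen for order $j$ with that chosen for any other order. Hence the feasible set is a product $\prod_{j\in\mathcal{J}}\{x_{\cdot j}:\ \sum_{i\in\mathcal{I}} x_{ij}=1,\ x_{ij}\in\{0,1\}\}$ of $n$ independent blocks, each amounting to the choice of a single speed index. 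I would then give a one-line exchange argument: for any feasible $\hat{x}$ and each $j$, $\sum_{i\in\mathcal{I}} Obj_{ij}\hat{x}_{ij}\ge \min_{i\in\mathcal{I}}Obj_{ij}=Obj_{i^* j}$, where $i^*$ is the index selected by the algorithm; summing over $j\in\mathcal{J}$ yields $f_{cost}(\hat{x})\ge f_{cost}(x^*)$, so the greedy per-order minimizer is globally optimal. This is exactly the \argmin step in Algorithm~\ref{alg:DG}.

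For the running time, I would count operations along the two phases of the algorithm. The preprocessing step forms the entries of $\mathbf{T},\mathbf{P},\mathbf{Q},\mathbf{H}$ via Eqs. \eqref{eq:tool_life}, \eqref{eq:order_process_time}, \eqref{eq:tool_qty} and \eqref{eq:sum_time}: each of the $mn$ speed--order pairs requires a fixed number of elementary operations (a power and a division for $T_{ij}$; products and a division for $p_{ij}$ and $t_{ij}$; a division, a floor, a division and a ceiling for $q_{ij}$; a multiply--add for $h_{ij}$), together with the multiply--adds forming $Obj_{ij}$. Tallying these per-entry costs gives the stated constant of $18$ operations per pair, hence $O(18mn)$ for the matrix/objective formation and the associated comparisons. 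The outer loop then performs, once per order, the selection $i^*=\argmin_{i\in\mathcal{I}} Obj_{ij}$ bookkeeping, the assignment $x_{i^*j}=1$, and the cost accumulation, which is $O(1)$ per order and contributes the additive $O(n)$. Summing the two phases gives $O(18mn+n)$.

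I expect the main obstacle to be not the optimality argument---which is immediate from separability---but pinning down the explicit constant $18$. This requires careful, formula-by-formula bookkeeping of every arithmetic, floor, and ceiling operation in Eqs. \eqref{eq:tool_life}--\eqref{eq:sum_time} and in the objective evaluation, under the convention that each elementary operation (including the exponentiation with $1/N$ precomputed) counts as unit cost. A secondary subtlety is to make explicit why the per-order subproblems are genuinely decoupled, i.e., that \ref{M1} carries no side constraints beyond \eqref{cons:M1_1} and \eqref{cons:M1_2}, so that the product structure of the feasible set---and therefore the validity of the greedy decomposition---holds without qualification.
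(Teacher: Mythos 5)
Your proposal is correct and follows essentially the same route as the paper: the paper likewise derives optimality from the per-order separability of \ref{M1} (phrased as a contradiction argument rather than your direct inequality $\sum_{i}Obj_{ij}\hat{x}_{ij}\ge\min_{i}Obj_{ij}$, but the two are trivially equivalent), and obtains $O(18mn+n)$ by the same operation tally over forming $\mathbf{T},\mathbf{P},\mathbf{Q},\mathbf{H}$ and $Obj_{ij}$ ($14mn+3mn+mn$ in the paper's bookkeeping) plus $O(1)$ work per order for the $\argmin$ update. Your direct argument is if anything slightly cleaner, and the exact constant $18$ is a convention-dependent tally that the big-$O$ statement does not really depend on.
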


\begin{proof}
To prove DG algorithm is optimal, we do it by contradiction. Suppose the solution of DG is not optimal, then there must exist at least one order, denoted by $\hat{j}$, that selects a spindle speed with smaller cost $r_m h_{ij} + C_{te}q_{ij}$ than $\hat{j}$. This contradicts with steps 7-9 of DG, which selects the spindle speed with the smallest cost. Thus, Algorithm \ref{alg:DG} is an exact algorithm.

Next, we consider the solution time of Algorithm \ref{alg:DG}. 
For preparing $\mathbf{T}$, $\mathbf{Q}$, $\mathbf{P}$ and $\bf H$,
it takes 2 operations to calculate $T_{ij}$, 6 operations to calculate $q_{ij}$, 3 operations to calculate $p_{ij}$ and 2 operations to calculate $h_{ij}$. Together with the coupled loops, this takes $14mn$ times.
For each $j$, it takes $3m$ operations to compute $Obj_{ij}$ for all $i\in \mathcal{I}$, $m$ operations to find the speed $i^*$ with minimum $Obj_{ij}$, and 1 operation to update the overall cost $f_{cost}(x)$, respectively. To sum up, we can get the total time complexity of Algorithm \ref{alg:DG} is $O(18mn+n)$, which completes the proof.
\end{proof}

\subsection{Integration with Makespan}\label{sec:IDM_CM}

Makespan constraint refers to the hard constraint that all the orders must be completed within a given time period. For example, this can describe the fixed time length for the worker shift. To model problem IDM-CM, we need an additional parameter:
\begin{description}
        \item[\quad $c:$] a given constant of the upper bound for the makespan.
\end{description}

With this additional parameter, we can model problem IDM-CM by modifying formulation \ref{M1} with makespan constraint as below IO model, denoted by \ref{M2}:
\begin{align}
\label{M2} \tag{M2} \quad \min \quad  & f_{cost}(x) \\
    \label{cons:M2-0}\textrm{s.t.} \quad & \sum_{i \in \mathcal{I}} x_{ij} = 1, ~~~~~~~~~~~ \forall j \in \mathcal{J},\\
    \label{cons:M2-1} & \sum_{j \in \mathcal{J}} \sum_{i \in \mathcal{I}} h_{ij} x_{ij} \leq c,\\
	 \label{cons:M2-2} & x_{ij} \in \{0,1\}, ~~~~~~~~~~~ \forall i \in \mathcal{I}, j \in \mathcal{J}.
\end{align}
Constraint \eqref{cons:M2-1} ensures the makespan of producing all the orders not exceeding the given time length $c$.
The makespan constraint adds additional complexity for the problem. We remark the complexity of problem IDM-CM is still open.

\section{Integrated Optimization of Cost and Due Date}\label{sec:IDM_CD}

We consider problem IDM when the integrated optimization of cost and due date is to be achieved, denoted by IDM-CD. In this case, the sequence of orders needs to be considered. 
We provide three categories of model based on the choices of decision variables. That are, completion time variables \eqref{sec:model_D1}, linear order variables \eqref{sec:model-D2} and positional variables \eqref{sec:model-D3} of orders, respectively.
In the first category, we propose a mixed-integer optimization (MIO) model. For each of the latter two, we present a quadratic integer optimization model and an MIO model by applying linearization technique.

\subsection{Modeling with Completion Time Variables}\label{sec:model_D1}

In the first model, we use completion time variables to indicate the order sequence for problem IDM-CD. The needed additional variables are defined as follows:
\begin{description}
        \item[\quad $y_{jk}:$] $y_{jk}=1$ if order $j$ is processed before order $k$, $\forall j, k \in \mathcal{J}$ and $j \neq k$. Otherwise $y_{jk}=0$;
        \item[\quad $C_j:$] completion time of order $j$, $\forall j \in \mathcal{J}$.
\end{description}

The problem IDM-CD can be formulated as below.
\begin{align}
\label{M3-1} \tag{M3-1} \quad \min \quad  
& f_{cost}(x)\\
    \label{cons:M3-1-1}	\textrm{s.t.} \quad & \sum_{i \in \mathcal{I}} x_{ij} = 1, ~~~~~~~~~~ \forall j \in \mathcal{J},\\
    \label{cons:M3-1-2} & C_j \geq \sum_{i \in \mathcal{I}} h_{ij} x_{ij}, ~~~~~~~~~~~ \forall j \in \mathcal{J},\\
    \label{cons:M3-1-3} & C_j + \sum_{i \in \mathcal{I}} h_{ik} x_{ik} \leq C_k + M(1-y_{jk}),   ~ \forall j<k \in \mathcal{J}, \\
    \label{cons:M3-1-4} & C_k + \sum_{i \in \mathcal{I}} h_{ij} x_{ij} \leq C_j + My_{jk}, ~~~~~~~ \forall j<k \in \mathcal{J}, \\
    \label{cons:M3-1-5} &  C_j \in[0,d_j], ~~~~~~~~~~~ \forall j \in \mathcal{J},\\
	\label{cons:M3-1-6} & x_{ij}, y_{jk} \in \{0,1\}, ~~~~~~~~~~~ \forall i \in \mathcal{I}, j,k \in \mathcal{J}.
\end{align}
Constraints \eqref{cons:M3-1-2} ensure that the completion time of each order is no later than its processing time. Constraints \eqref{cons:M3-1-3} and \eqref{cons:M3-1-4} are disjunctive constraints which ensure that either order $j$ is processed before order $k$, i.e. $y_{jk}=1$ in constraints \eqref{cons:M3-1-3}, or order $k$ is before order $j$, that is $y_{jk}=0$ in constraints \eqref{cons:M3-1-4}. $M$ is a sufficiently large number. Constraints \eqref{cons:M3-1-5} enforce the completion time is non-negative and no larger than the due date. In this formulation, the $M$ is a natural upper bound for $C_j$ for arbitrary $j \in \mathcal{J}$. Thus, we can set $M=\sum_{j\in \mathcal{J}} \max_{i\in \mathcal{I}}\{h_{ij}\}$.
The total number of variables for formulation \ref{M3-1} is $n^2+mn+n$, which includes $n^2 + mn$ binary variables and $n^2+3n$ constraints.

\begin{remark}
If $d_j=c$ for $j\in \mathcal{J}$, problem IDM-CD is reduced to problem IDM-CM.
\end{remark}

To check the above remark is valid, it's obvious to see that when $d_j=c$ for $j\in \mathcal{J}$, then the makespan of any feasible solutions to problem IDM-CD also satisfy the makespan limit $c$. Thus, problem IDM-CM is a special case of problem IDM-CD. It is more general for us to study problem IDM-CD.

\subsection{Modeling with Linear Ordering Variables}\label{sec:model-D2}

\subsubsection{Proposed Integer Optimization Model}

This formulation is based on the linear order variables $y_{jk}$ and $y_{kj}$ that either of them should be equal to 1 for arbitrary $j<k\in \mathcal{J}$.
The following formulation \ref{M3-2} is considered.
\begin{align}
\label{M3-2} \tag{M3-2} \quad \min \quad  
    & f_{cost}(x) \\
    \label{cons:M3-2-1}	\textrm{s.t.} \quad & \sum_{i \in \mathcal{I}} x_{ij} = 1, ~~~~~~~~~~ \forall j \in \mathcal{J},\\
    \label{cons:M3-2-2}& y_{jk} + y_{kj} = 1,   ~~~~~~~~~~ \forall j, k \in \mathcal{J}, j<k,\\
    \label{cons:M3-2-3}& y_{jk} + y_{kl} + y_{lj} \leq 2, ~~~~~~~~~~ \forall j, k, l \in \mathcal{J}, j \neq k \neq l, \\
    \label{cons:M3-2-4}& \sum_{j \in \mathcal{J}} \sum_{i \in \mathcal{I}} h_{ij} x_{ij} y_{jk} + \sum_{i \in \mathcal{I}} h_{ik} x_{ik} \leq d_k,   ~~~~~ \forall k \in \mathcal{J},\\
	\label{cons:M3-2-5} & x_{ij}, y_{jk} \in \{0,1\}, ~~~~~~~~~~~ \forall i \in \mathcal{I}, j,k \in \mathcal{J}.
\end{align}
Constraints \eqref{cons:M3-2-2} are a set of conflict constraints, which ensure that either order $j$ is processed before order $k$ or vice versa. Note here $j < k$ because of symmetry. Constraints \eqref{cons:M3-2-3} indicate the transitivity constraints that ensure a linear order of three arbitrary orders. 
Constraints \eqref{cons:M3-2-4} indicate the starting time of order $k$ plus its processing time must be no larger than its due date.
The total number of variables of formulation \ref{M3-2} with $m$ speeds and $n$ orders is $n^2+mn$, which are all binary variables and involve $n^3-2.5n^2+3.5n$ constraints.

Note that \ref{M3-2} is a quadratically constrained IO formulation due to constraints \eqref{cons:M3-2-4}. 
Considering that the current optimization solvers like \texttt{CPLEX} cannot efficiently deal with this kind of nonlinear problem, we adopt linearization strategies for this formulation in the following section.

\subsubsection{Reformulation with Linearization Technique}\label{sec:MIO-D2}

To enhance the solvability of formulation \ref{M3-2}, we first apply the linearization technique proposed in \cite{glover1974converting} to derive an MIO model, which is equivalent to \ref{M3-2}.
For arbitrary $j\in \mathcal{J}$, $k \in \mathcal{J}$, let  $\alpha_{jk}=(\sum_{i\in \mathcal{I}} h_{ij}x_{ij})y_{jk}$ and substitute the cross-product $\sum_{i\in \mathcal{I}} h_{ij} x_{ij} y_{jk}$ in \ref{M3-2}
using the following inequalities:
\begin{equation}\label{eq:linearization-1}
    \left\{
   \begin{aligned}
   &\sum_{i \in \mathcal{I}} h_{ij} x_{ij} - M(1-y_{jk}) \leq \alpha_{jk}\leq \sum_{i \in \mathcal{I}} h_{ij} x_{ij} + M(1-y_{jk}),\\
   &-M y_{jk} \leq \alpha_{jk}\leq M y_{jk},
   \end{aligned}
   \right.
\end{equation}
where $M$ is a sufficiently large number.
For arbitrary $j,k\in \mathcal{J}$, if $y_{jk}=1$, then $\alpha_{jk}=\sum_{i\in \mathcal{I}} h_{ij}x_{ij}$, and the inequalities become $\sum_{i \in \mathcal{I}} h_{ij} x_{ij} \leq \alpha_{jk}\leq \sum_{i \in \mathcal{I}} h_{ij} x_{ij}$ and $-M \leq \alpha_{jk}\leq M$. Clearly, the equivalence holds. Otherwise, if $y_{jk}=0$, the inequalities are $\sum_{i \in \mathcal{I}} h_{ij} x_{ij} - M \leq \alpha_{jk}\leq \sum_{i \in \mathcal{I}} h_{ij} x_{ij} + M$ and $0 \leq \alpha_{jk}\leq 0$, which indicates $\alpha_{jk}=0$.
Thus, we get an equivalent model to \ref{M3-2}.

We further reduce the above inequality constraints \eqref{eq:linearization-1} as
\begin{equation}\label{eq:linearization-2}
    \left\{
   \begin{aligned}
   & \sum_{i \in \mathcal{I}} h_{ij}x_{ij}-M(1-y_{jk}) \leq \alpha_{jk},\\
   & 0\leq \alpha_{jk}\leq My_{jk}.
   \end{aligned}
   \right.
\end{equation}
Thus, we obtain a new linearized formulation \ref{M3-2L} as below:
\begin{align}
\label{M3-2L} \tag{M3-2-L} \quad \min \quad  
& f_{cost}(x) \\
    \label{cons:M3-2L-1}	\textrm{s.t.} \quad 
    & \text{Constraints } \eqref{cons:M3-2-1}-\eqref{cons:M3-2-3},\\
    \label{cons:M3-2L-4}& \sum_{j \in \mathcal{J}} \alpha_{jk} + \sum_{i \in \mathcal{I}} h_{ik} x_{ik} \leq d_k,   ~~~~~~~~~~ \forall k \in \mathcal{J}, \\
	\label{cons:M3-2L-5} & \sum_{i \in \mathcal{I}} h_{ij} x_{ij} -M(1-y_{jk}) \leq \alpha_{jk} \leq My_{jk}, ~~~ \forall j, k \in \mathcal{J}, \\ 
    \label{cons:M3-2L-6}& \alpha_{jk} \geq 0, ~~~~~~~~~~~ \forall j, k \in \mathcal{J},\\  
    \label{cons:M3-2L-7}& x_{ij}, y_{jk} \in \{0,1\}, ~~~~~~~~~~~ \forall i \in \mathcal{I}, j,k \in \mathcal{J}.
\end{align}
This formulation is obtained by eliminating the right-hand-side of the first inequality and the left-hand-side of the second inequality in \eqref{eq:linearization-1}. The value of $M$ is set as the maximum total processing time of the orders, that is $M = \max_{i,j} \{h_{ij}\}$. 
The total number of variables is $2n^2 + mn$, which involves $n^2 + mn$ binary variables and $n^3 + 0.5n^2 +3.5n$ constraints. For formulations \ref{M3-2L} and \ref{M3-2}, we have the below Lemma \ref{lemma:linearization_equivalence}.

\begin{lemma}\label{lemma:linearization_equivalence}
For arbitrary optimal solution to \ref{M3-2L}, there exists an optimal solution to \ref{M3-2} with the same objective value.
\end{lemma}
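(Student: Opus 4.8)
The plan is to establish that \ref{M3-2} and \ref{M3-2L} attain the same optimal objective value, from which the stated existence claim follows at once. The crucial observation is that the objective $f_{cost}(x)$ depends only on the assignment variables $x$, not on the ordering variables $y$ or the auxiliary variables $\alpha_{jk}$. Hence it suffices to exhibit explicit solution mappings between the two feasible regions that preserve the $x$-part and the objective, and then argue $z_1^* \le z_2^*$ together with $z_2^* \le z_1^*$, where $z_1^*$ and $z_2^*$ denote the optimal objective values of \ref{M3-2} and \ref{M3-2L} respectively.

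For the direction $z_1^* \le z_2^*$, I would take an optimal solution $(x^*, y^*, \alpha^*)$ of \ref{M3-2L} and show that $(x^*, y^*)$ is feasible for \ref{M3-2} with the same objective. Constraints \eqref{cons:M3-2-1}--\eqref{cons:M3-2-3} are shared, so the only thing to verify is the quadratic due-date constraint \eqref{cons:M3-2-4}. First I would argue from the reduced linearization \eqref{cons:M3-2L-5} together with \eqref{cons:M3-2L-6} that in both cases $y^*_{jk}=1$ and $y^*_{jk}=0$ one has $\alpha^*_{jk} \ge \big(\sum_{i\in\mathcal{I}} h_{ij} x^*_{ij}\big) y^*_{jk}$: when $y^*_{jk}=1$ the left inequality of \eqref{cons:M3-2L-5} gives $\alpha^*_{jk}\ge \sum_i h_{ij}x^*_{ij}$, and when $y^*_{jk}=0$ the upper bound $\alpha^*_{jk}\le My^*_{jk}=0$ combined with $\alpha^*_{jk}\ge 0$ forces $\alpha^*_{jk}=0$. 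Substituting this lower bound into \eqref{cons:M3-2L-4} yields $\sum_{j}\big(\sum_i h_{ij}x^*_{ij}\big)y^*_{jk} + \sum_i h_{ik}x^*_{ik} \le \sum_j \alpha^*_{jk} + \sum_i h_{ik}x^*_{ik} \le d_k$, which is exactly \eqref{cons:M3-2-4}. Thus $(x^*,y^*)$ is feasible for \ref{M3-2} with objective $f_{cost}(x^*) = z_2^*$.

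For the reverse direction, I would take an optimal $(\hat x, \hat y)$ of \ref{M3-2} and set $\hat\alpha_{jk} := \big(\sum_i h_{ij}\hat x_{ij}\big)\hat y_{jk}$, then verify that $(\hat x, \hat y, \hat\alpha)$ is feasible for \ref{M3-2L}. Again \eqref{cons:M3-2L-1}--\eqref{cons:M3-2L-3} are inherited; the linearization constraints \eqref{cons:M3-2L-5}--\eqref{cons:M3-2L-6} follow by the same case split on $\hat y_{jk}$, using $M=\max_{i,j}\{h_{ij}\}$ and $\sum_i \hat x_{ij}=1$ to guarantee $\sum_i h_{ij}\hat x_{ij}\le M$, so that the upper bound $\hat\alpha_{jk}\le M\hat y_{jk}$ holds when $\hat y_{jk}=1$. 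Constraint \eqref{cons:M3-2L-4} then coincides with \eqref{cons:M3-2-4} by the definition of $\hat\alpha_{jk}$. This gives a feasible \ref{M3-2L} solution of objective $f_{cost}(\hat x) = z_1^*$, hence $z_2^* \le z_1^*$. Combining the two inequalities gives $z_1^* = z_2^*$, so the solution $(x^*,y^*)$ produced above is in fact optimal for \ref{M3-2}, proving the lemma.

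The step I expect to be the main obstacle is justifying the forward direction despite the fact that the reduced constraints \eqref{eq:linearization-2} no longer pin $\alpha_{jk}$ to the exact product $\big(\sum_i h_{ij}x_{ij}\big)y_{jk}$ — they retain only a one-sided bound. At an optimum of \ref{M3-2L} the value $\alpha^*_{jk}$ may therefore strictly overestimate the true product. The point to get right is that this overestimation only makes the left-hand side of the due-date constraint \eqref{cons:M3-2L-4} larger, so its satisfaction is a \emph{stronger} requirement than \eqref{cons:M3-2-4}; feasibility therefore transfers safely from \ref{M3-2L} to \ref{M3-2}, which is precisely why dropping the redundant halves of \eqref{eq:linearization-1} preserves equivalence rather than yielding a mere relaxation.
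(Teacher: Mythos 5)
Your proof is correct and follows essentially the same route as the paper: the forward direction (transferring feasibility of $(x^*,y^*)$ to \ref{M3-2} via the case split on $y_{jk}^*$, using $\alpha_{jk}^*\ge\big(\sum_{i\in\mathcal{I}}h_{ij}x_{ij}^*\big)y_{jk}^*$) is the paper's argument verbatim. The one point where you go further is that the paper merely asserts ``\ref{M3-2L} is a relaxation of \ref{M3-2}'' to conclude optimality, whereas you justify that claim by explicitly lifting any feasible $(\hat x,\hat y)$ of \ref{M3-2} to $(\hat x,\hat y,\hat\alpha)$ with $\hat\alpha_{jk}=\big(\sum_{i\in\mathcal{I}}h_{ij}\hat x_{ij}\big)\hat y_{jk}$ and checking the linearization constraints -- a worthwhile piece of added rigor, but not a different proof strategy.
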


\begin{proof}
For arbitrary optimal solution $(x^*, y^*, \alpha^*)$ to \ref{M3-2L}, we can show $(x^*, y^*)$ as an optimal solution to \ref{M3-2} with the same objective value.

We first show $(x^*, y^*)$ is a feasible solution to \ref{M3-2}. 
For arbitrary $j, k \in \mathcal{J}$, if $y_{jk}^* = 0$, then $\alpha_{jk}^* = 0$ based on constraints \eqref{cons:M3-2L-5} and \eqref{cons:M3-2L-6}. Then we have $\sum_{i \in \mathcal{I}} h_{ik} x_{ik}^* \leq d_k$ in constraints \eqref{cons:M3-2L-4}. This indicates $(x^*, y^*)$ satisfies constraints \eqref{cons:M3-2-4} in formulation \ref{M3-2} when $y_{jk}^*=0$. If $y_{jk}^*=1$, then by substituting into constraints \eqref{cons:M3-2L-5} first and then \eqref{cons:M3-2L-4}, we have
\begin{align}
    &\sum_{i \in \mathcal{I}} h_{ij} x_{ij}^* \leq \alpha_{jk}^* \leq M,\\
  \label{eq:lemma1}  & \sum_{j \in \mathcal{J}} \sum_{i \in \mathcal{I}} h_{ij} x_{ij}^* +  \sum_{i \in \mathcal{I}} h_{ik} x_{ik}^* \leq \sum_{j \in \mathcal{J}} \alpha_{jk}^* + \sum_{i \in \mathcal{I}} h_{ik} x_{ik}^* \leq d_k.
\end{align}
Inequalities \eqref{eq:lemma1} indicate $(x^*, y^*)$ satisfies constraints \eqref{cons:M3-2-4} when $y_{jk}^*=1$. To sum up, $(x^*, y^*)$ is a feasible solution to \ref{M3-2}.

Next, we prove the objective value of $(x^*, y^*)$ is less than or equal to that of any feasible solutions to \ref{M3-2}. Let $\mathcal{S}_2$ be the solution space of formulation \ref{M3-2}, i.e.,
\begin{equation}
    \mathcal{S}_2 := \{ (x,y) ~:~ \text{Constraints \eqref{cons:M3-2-1} - \eqref{cons:M3-2-5}} \}.
\end{equation}
As \ref{M3-2L} is a relaxation of \ref{M3-2}, the objective value of the optimal solution $(x^*, y^*, \alpha^*)$ will be less than or equal to that of any feasible solutions to \ref{M3-2}. That is
\begin{equation}
    f_{cost}(x^*) =  \sum_{j\in \mathcal{J}} \sum_{i \in \mathcal{I}} (r_m h_{ij} + C_{te}q_{ij}) x_{ij}^* \leq f_{cost} (x), \; \forall (x,y) \in \mathcal{S}_2.
\end{equation}
Note $f_{cost}(x^*)$ is also the objective value of feasible solution $(x^*, y^*)$ to \ref{M3-2}.
This indicates the objective value of $(x^*, y^*)$ is no larger than any feasible solutions to \ref{M3-2}.

Therefore, $(x^*, y^*)$ is an optimal solution to \ref{M3-2} with the same objective value $f_{cost}(x^*)$, which completes the proof.
\end{proof}

\subsection{Modeling with Positional Variables}\label{sec:model-D3}

\subsubsection{Proposed Integer Optimization Model}

This model is based on the positional variables that describe the relationship between arbitrary order with the position it is assigned in the schedule. The additional decision variables are defined as follows.

\begin{description}
    \item[\quad $z_{jk}:$] $z_{jk}=1$ if order $j$ is assigned on position $k$, $\forall j, k \in \mathcal{J}$. Otherwise $z_{jk}=0$;
    \item[\quad $C_k:$] completion time of the order at position $k$, $\forall k \in \mathcal{J}$.
\end{description}

A new IO model can be formulated as below.
\begin{align}
\label{M3-3} \tag{M3-3} \quad \min \quad  
    & f_{cost}(x) \\
    \label{cons:M3-3-1}	\textrm{s.t.} \quad & \sum_{i \in \mathcal{I}} x_{ij} = 1, ~~~~~~~~~~ \forall j \in \mathcal{J},\\
    \label{cons:M3-3-2} & \sum_{k \in \mathcal{J}} z_{jk} = 1, ~~~~~~~~~~ \forall j \in \mathcal{J},\\
    \label{cons:M3-3-3} & \sum_{j \in \mathcal{J}} z_{jk} = 1, ~~~~~~~~~~ \forall k \in \mathcal{J},\\
    \label{cons:M3-3-4} & 0 \leq  C_k \leq \sum_{j \in \mathcal{J}} d_j z_{jk}, ~~~~~~~~\forall k \in \mathcal{J},\\
    \label{cons:M3-3-5} & C_1 \geq \sum_{j \in \mathcal{J}} \sum_{i \in \mathcal{I}} h_{ij} x_{ij} z_{j1}, \\
    \label{cons:M3-3-6} & C_k \geq C_{k-1} + \sum_{j \in \mathcal{J}} \sum_{i \in \mathcal{I}} h_{ij} x_{ij} z_{jk}, ~~~~~~\forall k \in \mathcal{J}/\{1\},\\
	\label{cons:M3-3-7} & x_{ij}, z_{jk} \in \{0,1\}, ~~~~~~~~~~~ \forall i \in \mathcal{I}, j,k \in \mathcal{J}.
\end{align}
Constraints \eqref{cons:M3-3-2} and \eqref{cons:M3-3-3} ensure that a particular order can only be assigned to exactly one position and each position can be assigned with exactly one order. 
Constraints \eqref{cons:M3-3-4} ensure the completion time of order at position $k$ is non-negative and no later than its due time.
Constraints \eqref{cons:M3-3-5} and \eqref{cons:M3-3-6} describe the completion time of the order at position $k$. 
The total number of variables in this formulation is $n^2 + mn +n$, which involves $n^2 + mn$ binary variables and $6n$ constraints.
Note that \ref{M3-3} is a quadratically constrained IO formulation. We will also apply linearization technique to reformulate \ref{M3-3}.

\subsubsection{Reformulation with Linearization Technique}

For arbitrary $j,~k\in \mathcal{J}$, define a new variable $\beta_{jk}=(\sum_{i\in \mathcal{I}} h_{ij} x_{ij}) z_{jk}$. An equivalent MIO model can be obtained by substituting $\sum_{i\in \mathcal{I}} h_{ij} x_{ij} z_{jk}$ in \ref{M3-3} and with the below additional constraints
\begin{equation}\label{eq:linearization-3}
    \left\{
   \begin{aligned}
   &\sum_{i \in \mathcal{I}} h_{ij} x_{ij} - M(1-z_{jk}) \leq \beta_{jk}\leq \sum_{i \in \mathcal{I}} h_{ij} x_{ij} + M(1-z_{jk}),\\
   &-M z_{jk} \leq \beta_{jk}\leq M z_{jk}.
   \end{aligned}
   \right.
\end{equation}
Note the equivalence validation of the above inequalities is the same with \eqref{eq:linearization-1}. For concise presentation we skip it.

We further reduce the above inequality constraints \eqref{eq:linearization-3} as
\begin{equation}\label{eq:linearization-4}
    \left\{
   \begin{aligned}
   & \sum_{i \in \mathcal{I}} h_{ij}x_{ij}-M(1-z_{jk}) \leq \beta_{jk},\\
   & 0\leq \beta_{jk}\leq M z_{jk}.
   \end{aligned}
   \right.
\end{equation}
Thus, a new linearized formulation \ref{M3-3L} can be obtained as:
\begin{align}
\label{M3-3L} \tag{M3-3-L} \quad \min \quad  
& f_{cost}(x) \\
    \label{cons:M3-3L-1}	\textrm{s.t.} \quad 
    & \text{Constraints } \eqref{cons:M3-3-1}-\eqref{cons:M3-3-4},\\
    \label{cons:M3-3L-3} & C_1 \geq \sum_{j \in \mathcal{J}} \beta_{j1},\\
    \label{cons:M3-3L-4} & C_k \geq C_{k-1} + \sum_{j \in \mathcal{J}} \beta_{jk}, ~~~~~~\forall k \in \mathcal{J}/\{1\},\\
    \label{cons:M3-3L-5} & \sum_{i \in \mathcal{I}} h_{ij} x_{ij} -M(1-z_{jk}) \leq \beta_{jk} \leq Mz_{jk}, ~~~~ \forall j, k \in \mathcal{J}, \\ 
    \label{cons:M3-3L-6}& \beta_{jk} \geq 0, ~~~~~~~~~~~ \forall j, k \in \mathcal{J},\\  
	\label{cons:M3-3L-7} & x_{ij}, z_{jk} \in \{0,1\}, ~~~~~~~~~~~ \forall i \in \mathcal{I}, j,k \in \mathcal{J}.
\end{align}
The value of $M$ is set as the maximum total processing time among all the orders, that is $M = \max_{i,j} \{h_{ij}\}$. 
The total number of variables is $2n^2 + mn + n$, including $n^2 + mn$ binary variables and $2n^2 + 6n$ constraints.
For formulations \ref{M3-3} and \ref{M3-3L}, we have the below Lemma \ref{lemma:linearization_equivalence2}.

\begin{lemma}\label{lemma:linearization_equivalence2}
For arbitrary optimal solution to \ref{M3-3L}, there exists an optimal solution to \ref{M3-3} with the same objective value.
\end{lemma}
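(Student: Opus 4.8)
The plan is to mirror the argument of Lemma \ref{lemma:linearization_equivalence}, now relating the positional-variable formulations \ref{M3-3} and \ref{M3-3L} through the auxiliary variable $\beta_{jk}$ that linearizes the cross-product $(\sum_{i\in\mathcal{I}} h_{ij} x_{ij}) z_{jk}$. I would start from an arbitrary optimal solution $(x^*, z^*, \beta^*, C^*)$ to \ref{M3-3L} and show that discarding $\beta^*$ leaves a triple $(x^*, z^*, C^*)$ that is both feasible for \ref{M3-3} and optimal for it, with the identical objective $f_{cost}(x^*)$.

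First I would establish feasibility. Constraints \eqref{cons:M3-3-1}--\eqref{cons:M3-3-4} are shared verbatim between the two models, so they hold automatically for $(x^*, z^*, C^*)$. The only constraints altered by linearization are the completion-time relations \eqref{cons:M3-3-5}--\eqref{cons:M3-3-6}, which are replaced by \eqref{cons:M3-3L-3}--\eqref{cons:M3-3L-4}. The key pointwise fact to verify is $\beta_{jk}^* \ge \sum_{i\in\mathcal{I}} h_{ij} x_{ij}^* z_{jk}^*$ for every $j,k$: when $z_{jk}^*=0$, constraints \eqref{cons:M3-3L-5} and \eqref{cons:M3-3L-6} force $\beta_{jk}^*=0$, matching the vanishing product; when $z_{jk}^*=1$, the left inequality of \eqref{cons:M3-3L-5} gives $\beta_{jk}^* \ge \sum_{i\in\mathcal{I}} h_{ij} x_{ij}^*$, which equals the product in that case. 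Summing over $j$ yields $\sum_{j\in\mathcal{J}}\beta_{jk}^* \ge \sum_{j\in\mathcal{J}}\sum_{i\in\mathcal{I}} h_{ij} x_{ij}^* z_{jk}^*$ for each position $k$; substituting this into the relaxed relations \eqref{cons:M3-3L-3}--\eqref{cons:M3-3L-4} shows that the same $C^*$ satisfies the original constraints \eqref{cons:M3-3-5}--\eqref{cons:M3-3-6}.

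Next I would argue optimality via the relaxation direction, exactly as before. For any feasible $(x, z, C)$ of \ref{M3-3}, setting $\beta_{jk} := (\sum_{i\in\mathcal{I}} h_{ij} x_{ij}) z_{jk}$ produces a feasible solution of \ref{M3-3L} with the same objective $f_{cost}(x)$; hence \ref{M3-3L} is a relaxation of \ref{M3-3}, and the optimal value $f_{cost}(x^*)$ of \ref{M3-3L} satisfies $f_{cost}(x^*) \le f_{cost}(x)$ for every such $(x,z,C)$. Since $f_{cost}$ depends only on $x$, the feasible solution $(x^*, z^*, C^*)$ of \ref{M3-3} attains this lower bound, so it is optimal with objective $f_{cost}(x^*)$, completing the proof.

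I expect no serious obstacle, since the structure is identical to Lemma \ref{lemma:linearization_equivalence}; the only point requiring slight care is that the linearized cross-products now enter a family of recursive completion-time constraints \eqref{cons:M3-3L-3}--\eqref{cons:M3-3L-4} indexed by position, rather than the single due-date constraint \eqref{cons:M3-2-4} of the earlier lemma. One therefore applies the pointwise bound $\beta_{jk}^* \ge \sum_{i\in\mathcal{I}} h_{ij} x_{ij}^* z_{jk}^*$ to each position's constraint separately, and additionally observes that the completion-time bounds \eqref{cons:M3-3-4} are common to both models and hence need no re-checking. Each step is immediate once the pointwise inequality is in hand.
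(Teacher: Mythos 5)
Your proposal is correct and follows essentially the same route as the paper's proof: establish feasibility of $(x^*,z^*,C^*)$ for \ref{M3-3} by case analysis on $z_{jk}^*$, then invoke the fact that \ref{M3-3L} is a relaxation of \ref{M3-3} to conclude optimality. Your explicit pointwise bound $\beta_{jk}^* \ge \sum_{i\in\mathcal{I}} h_{ij} x_{ij}^* z_{jk}^*$ summed over $j$ is in fact a slightly cleaner rendering of the feasibility step than the paper's, which substitutes $z_{jk}^*=1$ somewhat loosely into the summed constraints, but the argument is the same.
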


\begin{proof}
The proof idea is similar to Lemma \ref{lemma:linearization_equivalence}.
For arbitrary optimal solution $(x^*, z^*, C^*, \beta^*)$ to \ref{M3-3L}, we will construct $(x^*, z^*, C^*)$ as an optimal solution to \ref{M3-3} with the same objective value.

We first show $(x^*, z^*,C^*)$ is a feasible solution to \ref{M3-3}. It is simple to check $(x^*, z^*,C^*)$ is feasible when $z_{jk}^*=0$ by substituting $z_{jk}^*=0$ into \eqref{cons:M3-3L-5} and then \eqref{cons:M3-3L-3} and \eqref{cons:M3-3L-4}. Otherwise, when substituting $z_{jk}^*=1$ we have
\begin{align}
    & \sum_{i \in \mathcal{I}} h_{ij} x_{ij}^* \leq \beta_{jk}^* \leq M, ~~~~ \forall j, k \in \mathcal{J}, \\ 
     \label{eq:lemma2-1}& C_1^* \geq \sum_{j \in \mathcal{J}} \beta_{j1}^* \geq \sum_{j \in \mathcal{J}} \sum_{i \in \mathcal{I}} h_{ij} x_{ij}^*,\\
    \label{eq:lemma2-2} & C_k^* \geq C_{k-1}^* + \sum_{j \in \mathcal{J}} \sum_{i \in \mathcal{I}} h_{ij} x_{ij}^*, ~\forall k \in \mathcal{J}/\{1\}
\end{align}
Inequalities \eqref{eq:lemma2-1} and \eqref{eq:lemma2-2} indicate $(x^*, z^*,C^*)$ is a feasible solution when $z_{jk}^*=1$.

Next, similar to Lemma \ref{lemma:linearization_equivalence}, we define the solution space of formulation \ref{M3-3} as
$
    \mathcal{S}_3 := \{ (x,z,C) : \text{Constraints \eqref{cons:M3-3-1} - \eqref{cons:M3-3-7}} \},
$
and we can get that $f_{cost}(x^*) \leq f_{cost}(x), \forall (x,z,C)\in \mathcal{S}_3$ as \ref{M3-3L} is a relaxation of \ref{M3-3}. 
Thus, we conclude $(x^*,z^*,C^*)$ is an optimal solution to \ref{M3-3}.
\end{proof}

\vspace{-0.15cm}

Based on Lemmas \ref{lemma:linearization_equivalence} and \ref{lemma:linearization_equivalence2}, we will directly solve models \ref{M3-2L} and \ref{M3-3L} using MIO solver instead of \ref{M3-2} and \ref{M3-3}, respectively.
The comparison of the three MIO formulations is discussed in next section.

\section{Computational Experiments}\label{sec:experiments}

Comprehensive computational experiments are conducted for different problem settings. For problem IDM-C, we compare the performance of methods by solving models \ref{M1} and \ref{M1-LP} with modern optimization solver \texttt{CPLEX}, DG algorithm and the expected cost method (ECM henceforth) from \cite{karandikar2021physics}. For problem IDM-CM, we compare methods by solving \ref{M2} using \texttt{CPLEX} with ECM. The three MIO models for problem IDM-CD, i.e., \ref{M3-1}, \ref{M3-2L} and \ref{M3-3L}, are solved by \texttt{CPLEX} and compared in terms of their computational efficiency.

All the methods are coded in Python. The optimization solver \texttt{CPLEX 20.1.0} is used for solving all the IO and MIO models with the Python interface \texttt{docplex}. All the settings for \texttt{CPLEX} remain default except for time limit as 600 seconds. The experiments are deployed on a mobile workstation with Intel(R) Xeon(R) W-10885M CPU @ 2.40GHz, 128 GB memory, 64 bit Windows 10 Pro operating system for workstations.

\subsection{Data Generation}\label{sec:expe_settings}

\emph{Machine tool.}
The settings for machine tool including the tool geometry, cutting parameters (except for spindle speed set) and tool life model are the same with that in \cite{karandikar2021physics}. In particular, the tool is a single-insert endmill with diameter $D=19.05$ mm and $N_t=1$. The radial depth of cut is $a=4.7$ mm, the axial depth of cut is $b=3$ mm and the feed per tooth is $f_t=0.06$ mm, respectively. 
The constants in the Taylor tool life equation are $N=0.388$ and $C=735.2$ m/min.

\emph{Order and part.}
The number of orders $n$ is from a set  $\{2,4,6,8,10,12,14,16,18,20\}$, with one single part type for each order. The quantity $l_j$ of the part for one order is an integer number randomly sampled from uniform distribution $U[10,200]$. The volume $v_j$ to be removed per part is an integer number randomly sampled from uniform distribution $U[5000,15000]$ with unit mm$^3$. The makespan limitation $c$ under each case of $n$ is decided by considering two values: the makespan of the solution by solving problem IDM-C which only considers the cost, and the minimum makespan when adopting the minimum $h_{ij}$ for each order $j$. We set $c$ as the integer number by rounding the average of these two values.

\emph{Due date.}
We specify the due date by generating the data without regarding for feasibility and then discarding instances that have no feasible schedule. Specifically, we first obtain the expected cutting speed for each order using the ECM in \cite{karandikar2021physics}. Henceforth, the makespan $\bar{C}_{max}$ and the mean value of total processing time $\bar{h}$ among the $n$ orders can be obtained. The initial due date $d_j$ is generated by randomly sampling from uniform distribution $U[0.4\bar{C}_{max}, 0.5\bar{C}_{max}]$. A schedule is then generated using earliest due date rule based on $d_j$, and the completion time $C_j$ for each order is thus obtained. We then check the feasibility of $d_j$, i.e., $d_j$ is accepted if $d_j \geq C_j$; otherwise $d_j=C_j+r$, where $r$ is a randomly sampled integer number from uniform distribution $U[0,\bar{h}]$. By this way of setting $d_j$, we ensure at least the expected speeds are feasible solution to problem IDM-CD.

\emph{Production and cost.}
The candidate cutting speed $V$ is evenly spaced in $[220,420]$ m/min with the interval 2 m/min. Thus, in total $m=100$ speeds are used for one order. 
Note here without loss of generality, we use the same set of cutting speed for all the $n$ orders.
The spindle speed matrix $\bf \Omega$ is thus constructed, with the equivalent minimum spindle speed 3676.02 rpm and the maximum spindle speed 6984.44 rpm.
Lastly, the coefficients of the cost model are set as $r_m = 2$ \$/min, $t_{ch}=2$ min and $C_{te}=\$12.5$  per tool edge.

For each order number, 10 independent data instances using different random seeds are generated with the above data generation procedure. Therefore, in total 100 instances are used. All the reported results are the average values obtained by the 10 independent instances for each order number.\\

\subsection{Results for Problem IDM-C}\label{sec:C_results}

\begin{table*}[t]
    \caption{Numerical results of solving problem IDM-C by the expected cost method (ECM) \cite{karandikar2021physics}, solving models \ref{M1} and \ref{M1-LP}, and the exact algorithm DG. }
  \label{tab:IDM_C_results}
  \resizebox{\textwidth}{!}{
  \begin{threeparttable}[t]
  \centering
  \begin{tabular}{r|rrrrrrrrrrrrrrr}
  \toprule
   \multirow{2}{*}{Order \#} && \multicolumn{2}{c}{ECM} && \multicolumn{3}{c}{M1} && \multicolumn{3}{c}{M1-LP} && \multicolumn{3}{c}{DG}  \\
  \cline{3-4} \cline{6-8} \cline{10-12} \cline{14-16}
  && Obj: Cost & Time (s) &&  Obj: Cost & CRR (\%) & Time (s) && Obj: Cost & CRR (\%) & Time (s) && Obj: Cost  & CRR (\%) & Time (s) \\
  \midrule
2	&&	2082.55	&	2.81	&&	\textbf{1977.23}	&	5.06	&	0.01	&&	\textbf{1977.23}	&	5.06	&	0.01	&&	\textbf{1977.23}	&	5.06	&	\textbf{0.000}	\\
4	&&	3301.91	&	5.58	&&	\textbf{3174.74}	&	3.85	&	0.01	&&	\textbf{3174.74}	&	3.85	&	0.02	&&	\textbf{3174.74}	&	3.85	&	\textbf{0.000}	\\
6	&&	5418.85	&	8.28	&&	\textbf{5184.44}	&	4.33	&	0.01	&&	\textbf{5184.44}	&	4.33	&	0.03	&&	\textbf{5184.44}	&	4.33	&	\textbf{0.000}	\\
8	&&	6670.49	&	11.01	&&	\textbf{6406.96}	&	3.95	&	0.01	&&	\textbf{6406.96}	&	3.95	&	0.03	&&	\textbf{6406.96}	&	3.95	&	\textbf{0.003}	\\
10	&&	9688.88	&	13.87	&&	\textbf{9314.33}	&	3.87	&	0.01	&&	\textbf{9314.33}	&	3.87	&	0.03	&&	\textbf{9314.33}	&	3.87	&	\textbf{0.003}	\\
12	&&	11872.06	&	16.66	&&	\textbf{11363.36}	&	4.28	&	0.01	&&	\textbf{11363.36}	&	4.28	&	0.05	&&	\textbf{11363.36}	&	4.28	&	\textbf{0.003}	\\
14	&&	13597.67	&	19.27	&&	\textbf{13061.32}	&	3.94	&	0.02	&&	\textbf{13061.32}	&	3.94	&	0.05	&&	\textbf{13061.32}	&	3.94	&	\textbf{0.003}	\\
16	&&	15417.88	&	21.77	&&	\textbf{14768.81}	&	4.21	&	0.02	&&	\textbf{14768.81}	&	4.21	&	0.06	&&	\textbf{14768.81}	&	4.21	&	\textbf{0.005}	\\
18	&&	16849.63	&	24.57	&&	\textbf{16127.64}	&	4.28	&	0.02	&&	\textbf{16127.64}	&	4.28	&	0.06	&&	\textbf{16127.64}	&	4.28	&	\textbf{0.003}	\\
20	&&	20654.23	&	27.29	&&	\textbf{19883.29}	&	3.73	&	0.02	&&	\textbf{19883.29}	&	3.73	&	0.08	&&	\textbf{19883.29}	&	3.73	&	\textbf{0.003}	\\
\bottomrule
\end{tabular}
\begin{tablenotes}
\item [*] All values in all columns are the average results of 10 independent runs for each order number. 
\item [*] All the 10 independent instances for all the order numbers are solved to optimality with Gap 0.00\% by \ref{M1}, \ref{M1-LP} and DG (shown by bold numbers). 
\end{tablenotes}
\end{threeparttable}
}
\end{table*}

Table \ref{tab:IDM_C_results} presents the results for solving problem IDM-C by the ECM in \cite{karandikar2021physics}, solving models \ref{M1} and \ref{M1-LP}, and the DG algorithm \ref{alg:DG}. 
The columns show the objectives (minimum cost in \$ by each method) and the solution time in second. In the column ``CRR", we calculate the cost reduction ratio (CRR) as
\begin{equation}
    CRR = \frac{Obj_{ECM} - Obj}{Obj_{ECM}} \times 100\%,
\end{equation}
where $Obj_{ECM}$ and $Obj$, respectively, refer to the objective value from ECM and other methods under comparison.

\begin{figure}[t]
    \centering
    \includegraphics[width=3.3in]{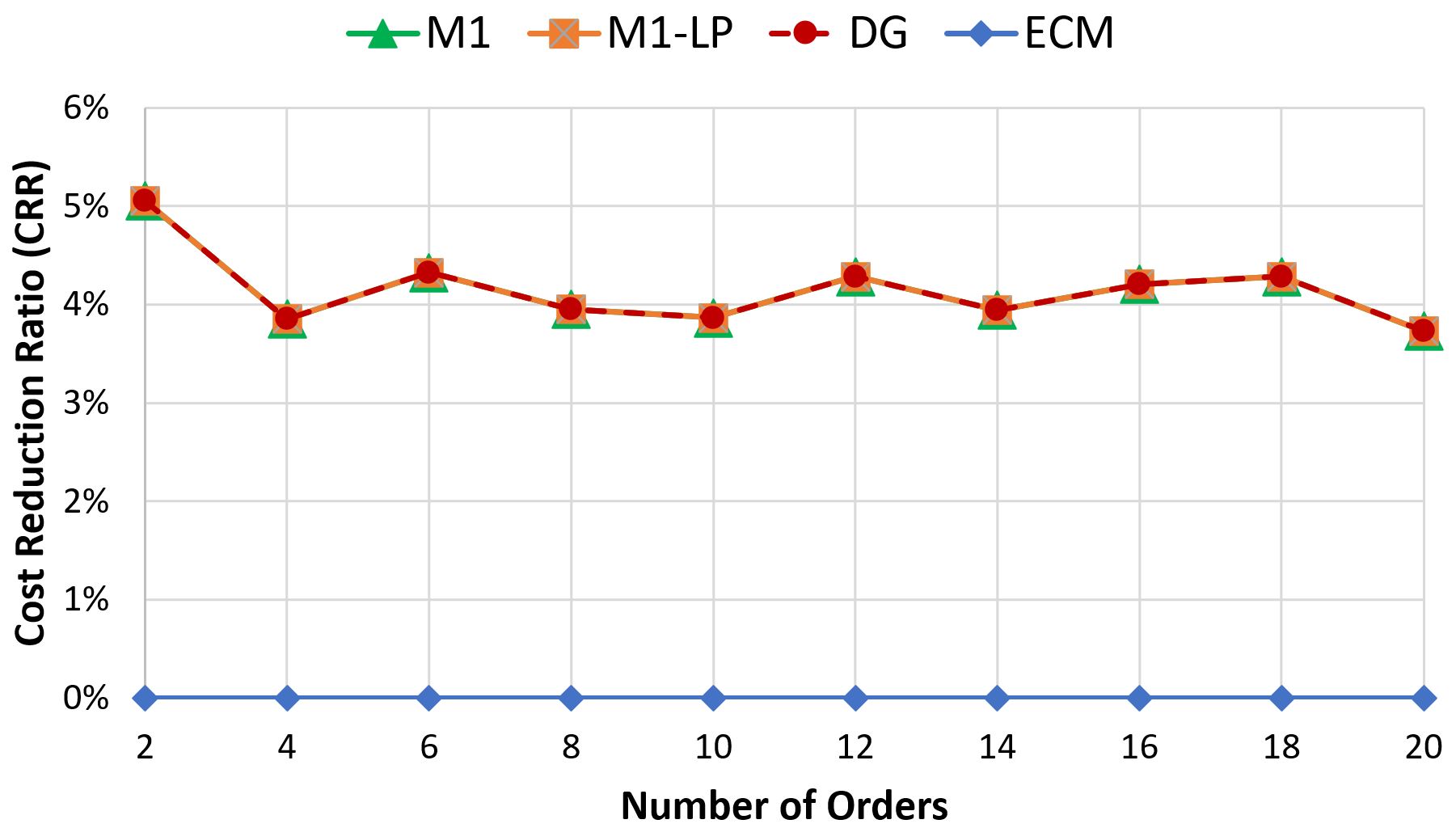}
    \caption{The comparison of cost reduction ratio (CRR) for problem IDM-C. \ref{M1}, \ref{M1-LP} and DG are overlapped as they all obtain the optimal solutions.}
    \label{fig:IDM-C_results}
\end{figure}

As shown in Table \ref{tab:IDM_C_results}, solving \ref{M1} and \ref{M1-LP} by \texttt{CPLEX}, and the DG algorithm all find the optimal solution for all the 100 instances within 0.1 second, shown as the bold numbers. Remarkably, the DG algorithm preforms as the fastest among the three, within 0.005 second for all the 100 instances. In comparison with the ECM, formulations \ref{M1} and \ref{M1-LP} and DG algorithm show that, by selecting different cutting speeds, the overall production cost can be reduced by 3\% - 5\% on average.
Figure \ref{fig:IDM-C_results} shows results of CRR in Table \ref{tab:IDM_C_results}. Note the objectives and the associated CRR for formulations \ref{M1}, \ref{M1-LP} and DG algorithm are the same and thus are overlapped.

\subsection{Results for Problem IDM-CM}\label{sec:M_results}

\begin{table*}[t]
  \centering
  \small
  \caption{Numerical results of solving problem IDM-CM by expected cost method (ECM) \cite{karandikar2021physics} and solving model \ref{M2}.}
  \label{tab:IDM_M_results}
  \resizebox{\textwidth}{!}{
  \begin{threeparttable}[t]
  \begin{tabular}{rr|rrrrrrrrrrrrr}
  \toprule
  \multirow{2}{*}{Order \#} & Makespan  && \multicolumn{3}{c}{ECM} && \multicolumn{6}{c}{M2}   \\
  \cline{4-6} \cline{8-13} 
  &Limit: $c$ &&  Obj: Cost & Time (s) & Makespan && Obj: Cost & Num & Gap (\%) & LP Relaxation & Time (s) & Makespan   \\
  \midrule
2	&	633	&&	2082.55	&	2.81	&	763.15	&&	\textbf{2052.42}	&	\textbf{10}	&	0.00	&	2034.58	&	0.00	&	624.96	\\
4	&	1027	&&	3301.91	&	5.58	&	1209.71	&&	\textbf{3245.44}	&	\textbf{10}	&	0.00	&	3233.46	&	0.05	&	1021.47	\\
6	&	1678	&&	5418.85	&	8.28	&	2000.68	&&	\textbf{5281.53}	&	\textbf{10}	&	0.00	&	5271.44	&	0.08	&	1672.02	\\
8	&	2070	&&	6670.49	&	11.01	&	2449.62	&&	\textbf{6532.52}	&	\textbf{10}	&	0.00	&	6526.56	&	0.08	&	2067.51	\\
10	&	3007	&&	9688.88	&	13.87	&	3580.07	&&	\textbf{9512.87}	&	\textbf{10}	&	0.00	&	9502.75	&	0.11	&	3003.93	\\
12	&	3669	&&	11872.06	&	16.66	&	4362.28	&&	\textbf{11571.73}	&	\textbf{10}	&	0.00	&	11566.20	&	0.08	&	3667.11	\\
14	&	4198	&&	13597.67	&	19.27	&	5013.83	&&	\textbf{13364.26}	&	\textbf{10}	&	0.00	&	13354.87	&	0.05	&	4195.26	\\
16	&	4775	&&	15417.88	&	21.77	&	5687.06	&&	\textbf{15047.90}	&	\textbf{10}	&	0.00	&	15042.45	&	0.08	&	4772.70	\\
18	&	5202	&&	16849.63	&	24.57	&	6221.69	&&	\textbf{16426.20}	&	\textbf{10}	&	0.00	&	16421.71	&	0.05	&	5199.98	\\
20	&	6413	&&	20654.23	&	27.29	&	7656.49	&&	\textbf{20286.36}	&	\textbf{10}	&	0.00	&	20281.15	&	0.08	&	6411.31	\\
\bottomrule
\end{tabular}
\begin{tablenotes}
\item [*] All values in all columns except for ``Num" are the average results of 10 independent runs for each order number. 
\item [*] Column ``Num" indicates the number of instances solved to optimality among the 10 instances.
\end{tablenotes}
\end{threeparttable}
}
\end{table*}

Table \ref{tab:IDM_M_results} shows the results for solving problem IDM-CM. In the column ``Makespan Limit: $c$", we show the average makespan limit within which all parts must be finished. Formulation \ref{M2} is solved to optimality (with the gap 0.00) for all the 10 instances under each order number within 0.11 second, shown as the bold numbers. 
The LP relaxation results show \ref{M2} is tight, which can explain for its good computational efficiency.

Notably, due to the makespan limit, the minimum cost obtained in solving problem IDM-C in Table \ref{tab:IDM_C_results} cannot be obtained in solving problem IDM-CM. Nevertheless, all these optimal solutions satisfy the makespan limit. 
While the ECM does not support to set makespan limit, the results here show that our integration provides the capability to set many practical production requirements.
Furthermore, for all the 10 order numbers, both the cost and makespan of ECM solutions are larger than those of the \ref{M2} solutions. 
This indicates by using our method, it is practical for the machine shop to achieve solutions that lead to both smaller cost and shorter makespan.

\subsection{Results for Problem IDM-CD}\label{sec:D_results}

\begin{table*}[t]
  \centering
  \small
  \caption{Numerical results of problem IDM-CD by solving models \ref{M3-1}, \ref{M3-2L} and \ref{M3-3L}.}
  \label{tab:IDM_D_results}
  \resizebox{\textwidth}{!}{
  \begin{threeparttable}[t]
  \begin{tabular}{r|rrrrrrrrrrrrrrrr}
  \toprule
  \multirow{2}{*}{Order \#} && \multicolumn{4}{c}{M3-1} && \multicolumn{4}{c}{M3-2-L} && \multicolumn{4}{c}{M3-3-L}  \\
  \cline{3-6} \cline{8-11} \cline{13-16} 
  &&  Obj: Cost  & Num & Gap (\%) & Time (s) && Obj: Cost & Num  & Gap (\%) & Time (s) && Obj: Cost & Num  & Gap (\%) & Time (s)\\
  \midrule
2	&&	\textbf{1920.68}	&	\textbf{10}	&	0.00	&	0.02	&&	\textbf{1920.68}	&	\textbf{10}	&	0.00	&	0.02	&&	\textbf{1920.68}	&	\textbf{10}	&	0.00	&	0.03	\\
4	&&	\textbf{4463.55}	&	\textbf{10}	&	0.00	&	0.08	&&	\textbf{4463.55}	&	\textbf{10}	&	0.00	&	0.05	&&	\textbf{4463.55}	&	\textbf{10}	&	0.00	&	0.07	\\
6	&&	\textbf{5412.14}	&	\textbf{10}	&	0.00	&	0.25	&&	\textbf{5412.14}	&	\textbf{10}	&	0.00	&	0.07	&&	\textbf{5412.14}	&	\textbf{10}	&	0.00	&	0.14	\\
8	&&	\textbf{7293.43}	&	\textbf{10}	&	0.00	&	0.27	&&	\textbf{7293.43}	&	\textbf{10}	&	0.00	&	0.10	&&	\textbf{7293.43}	&	\textbf{10}	&	0.00	&	1.83	\\
10	&&	\textbf{11129.40}	&	\textbf{10}	&	0.00	&	1.02	&&	\textbf{11129.40}	&	\textbf{10}	&	0.00	&	0.39	&&	\textbf{11129.40}	&	\textbf{10}	&	0.01	&	25.43	\\
12	&&	\textbf{12700.74}	&	\textbf{10}	&	0.01	&	14.17	&&	\textbf{12700.74}	&	\textbf{10}	&	0.00	&	2.66	&&	12700.74	&	5	&	3.14	&	508.27	\\
14	&&	12615.67	&	8	&	0.44	&	163.11	&&	\textbf{12615.67}	&	\textbf{10}	&	0.01	&	10.80	&&	12615.67	&	1	&	6.05	&	601.27	\\
16	&&	13232.52	&	4	&	2.52	&	485.71	&&	\textbf{13232.52}	&	\textbf{10}	&	0.01	&	45.09	&&	13232.52	&	1	&	6.06	&	602.53	\\
18	&&	18001.38	&	0	&	6.47	&	600.30	&&	\textbf{18001.38}	&	\textbf{10}	&	0.01	&	184.39	&&	18001.38	&	0	&	8.06	&	602.33	\\
20	&&	21394.91	&	0	&	8.76	&	600.60	&&	\textbf{21394.91}	&	\textbf{8}	&	0.76	&	380.35	&&	21394.91	&	0	&	8.76	&	602.19	\\
\bottomrule
\end{tabular}
\begin{tablenotes}
\item [*] All values in all columns except for ``Num" are the average results of 10 independent runs for each order number. 
\item [*] Column ``Num" indicates the number of instances solved to optimality among the 10 instances.
\end{tablenotes}
\end{threeparttable}
}
\end{table*}

Table \ref{tab:IDM_D_results} presents the numerical results for solving problem IDM-CD with formulations \ref{M3-1}, \ref{M3-2L} and \ref{M3-3L}, in terms of the objective value, number of the instances solved to optimality out of 10, gap to the lower bound returned by \texttt{CPLEX} and the solution time.
Obviously, the solution quality of \ref{M3-2L} is the better than that of \ref{M3-1} and further better than that of \ref{M3-3L}.
This can be seen from the number of instances solved to optimality.
Among all the 100 instances, \ref{M3-2L} solves 98 to optimality except for 2 instances at $n=20$ orders, while that of \ref{M3-1} and \ref{M3-3L} are 72 and 66, respectively.
The same conclusions can be drawn from the gap and the solution time. 
For example, at $n=18$, \ref{M3-2L} takes 0.01 second to solve all the 10 instances to optimality, while \ref{M3-1} takes 600.30 seconds yet still has gap 6.47\%, and \ref{M3-3L} takes 602.33 seconds with gap 8.06\%.
Note that time above 600 seconds means the solving procedure is terminated due to time limit of \texttt{CPLEX}.

It is interesting to see although with different gaps for the three models, the objectives are all the same.
In our experience, it can often be the case that \texttt{CPLEX} has found the optimal solution in 5 minutes, though it takes much longer time to certify its optimality. Thus, formulations with fast increasing lower bounds can certify the optimality in shorter time.

As a closing remark for this section, the results with cost, makespan and due date consideration validate the capability our methods to integrate constraints for describing practical requirements for the operational excellence of machine tool.

\section{Conclusion, Discussion and Future Work}\label{sec:conclusion}

In summary, we have identified and studied a significant problem of the integration of discrete-event dynamics and machining dynamics for the operational excellence of machine tool for the first time. 
The machining stability, various cutting speeds and tool life were incorporated into the production of a series of parts from multiple orders in the queue of machine tool.
Based on a state-of-the-art logistic regression model for tool life prediction, a new cost function was developed for orders with cutting speed choices as the variables.
Then the cost was minimized via choosing different stable cutting speeds.

We have demonstrated a set of models and algorithms by considering various practical production scenarios, including cost, makespan and due date.
When considering the integrated optimization of cost and makespan, we developed integer optimization models and exact algorithm. Then, we considered the integrated optimization of cost and due date. Three categories of mixed-integer optimization models were presented based on the choices of decision variables. The linearization technique was applied to obtain the models.
The results showed while satisfying the makespan and due date requirements, our methods achieved on average 3\% - 5\% reduction of the cost, which has the potential for cost saving in practice.

Future work includes the deeper integration from both the discrete-event dynamics and the machining dynamics. Besides cutting speed, other machining parameters can be as decision variables, including axial depth of cut and feed rate. Thus, we may have nonlinear and nonconvex constraints from the stability lobe diagram. Also, the stochastic tool life prediction can be considered.
In the meantime, the complexities of the problems for makespan and due date constraints are one future direction. The practical extensions of the problems are to consider different candidate speeds for orders, various part types of an order, and multiple machine tools in the machine shop.

\section*{Acknowledgements}

This work has been supported in part by the University of Tennessee Knoxville under the Graduate Advancement Training and Education (GATE) program of Science Alliance, and the Southeastern Advanced Machine Tools Networks (SEAMTN).

\bibliographystyle{abbrv}
\bibliography{mybibfile}

\end{document}